\documentclass[10pt]{amsart}

\usepackage{amsfonts}
\usepackage{amsthm}
\usepackage{amsmath}
\usepackage{graphicx}
\usepackage{amscd,amssymb,amsthm}

\setlength{\paperwidth}{210mm} \setlength{\paperheight}{297mm}
\setlength{\oddsidemargin}{0mm} \setlength{\evensidemargin}{0mm}
\setlength{\topmargin}{-20mm} \setlength{\headheight}{10mm}
\setlength{\headsep}{13mm} \setlength{\textwidth}{160mm}
\setlength{\textheight}{240mm} \setlength{\footskip}{15mm}
\setlength{\marginparwidth}{0mm} \setlength{\marginparsep}{0mm}

%==================================================================
\newcounter{minutes}\setcounter{minutes}{\time}
\divide\time by 60
\newcounter{hours}\setcounter{hours}{\time}
\multiply\time by 60 \addtocounter{minutes}{-\time}
%==================================================================

\newcommand{\ls}{\mathbf{L}}
\newcommand{\dt}{{\rm d}t}
\newcommand{\ds}{{\rm d}s}

\title{Functional inequalities for modified Struve functions}

\author[\'Arp\'ad Baricz]{\'Arp\'ad Baricz}
\address{Department of Economics, Babe\c{s}-Bolyai University, 400591 Cluj-Napoca, Romania} \email{bariczocsi@yahoo.com}

\author{Tibor K. Pog\'any}
\address{Faculty of Maritime Studies, University of Rijeka, Rijeka 51000, Croatia}
\email{poganj@brod.pfri.hr}

\keywords{Modified Struve function, modified Bessel function, Tur\'an type inequality.}

\subjclass[2010]{Primary 33C10, Secondary 39B62.}

\newtheorem{lemma}{Lemma}
\newtheorem{theorem}{Theorem}

\begin{document}

%==================================================================
\def\thefootnote{}
\footnotetext{ \texttt{File:~\jobname .tex,
          printed: \number\year-0\number\month-\number\day,
          \thehours.\ifnum\theminutes<10{0}\fi\theminutes}
} \makeatletter\def\thefootnote{\@arabic\c@footnote}\makeatother
%==================================================================

\maketitle
\allowdisplaybreaks

\begin{abstract}
In this paper, by using a general result on the monotonicity of quotients of power series, our aim is to prove some monotonicity and convexity results
for the modified Struve functions. Moreover, as consequences of the above mentioned results, we present some functional inequalities as well as lower and upper bounds for modified Struve functions. Our main results complement and improve the results of Joshi and Nalwaya \cite{joshi}.
\end{abstract}

\section{Introduction}
\setcounter{equation}{0}

In the last few decades many inequalities and monotonicity properties for special functions (like Bessel, modified Bessel, hypergeometric) and their several combinations have been deduced by many authors, motivated by various problems that arise in wave mechanics, fluid mechanics, electrical
engineering, quantum billiards, biophysics, mathematical physics, finite elasticity, probability and statistics, special relativity and radar signal processing. Although the inequalities involving the quotients of modified Bessel functions of the first and second kind are interesting in their own right, recently the lower and upper bounds for such ratios received increasing attention, since they play an important role in various problems of mathematical physics and electrical engineering. For more details, see for example \cite{bariczedin} and the references therein. The modified Struve functions are related to modified Bessel functions, thus their properties can be useful in problems of mathematical physics. In \cite{joshi} Joshi and Nalwaya presented some two-sided inequalities for modified Struve functions and for their ratios. They deduced also some Tur\'an and Wronski type inequalities for modified Struve functions by using some generalized hypergeometric function representation of the Cauchy product of two modified Struve functions. Our main motivation to write this paper is to complement and improve the results of Joshi and Nalwaya \cite{joshi}. In this paper, by using a classical result on the monotonicity of quotients of MacLaurin series, our aim is to prove some monotonicity and convexity results for the modified Struve functions. Moreover, as consequences of the above mentioned results, we present some functional inequalities as well as lower and upper bounds for modified Struve functions. The paper is organized as follows: Section 2 contains the main results of the paper and their proofs; while Section 3 contains the concluding remarks on the main results of Section 2. The key tools in the proofs of the main results are the techniques developed in the extensive study of modified Bessel functions of the first and second kind and their ratios. The difficulty in the study of the modified Struve function consists in the fact that the modified Struve differential equation is not homogeneous, however, as we can see below, the power series structure of modified Struve function is very useful in order to study its monotonicity and convexity properties.

\section{Modified Struve function: Monotonicity patterns and functional inequalities}
\setcounter{equation}{0}

We begin with an old result of Biernacki and Krzy\.z \cite{biernacki}, which will be used in the sequel.

\begin{lemma}\label{lemmapower}
Consider the power series $f(x)=\sum\limits_{n\geq0}a_nx^n$
and $g(x)=\sum\limits_{n\geq0}b_nx^n,$ where $a_n\in\mathbb{R}$ and $b_n>0$ for all $n\in\{0,1,\dots\},$ and suppose that both converge on $(-r,r),$ $r>0.$
If the sequence $\{a_n/b_n\}_{n\geq 0}$ is increasing (decreasing),
then the function $x\mapsto f(x)/g(x)$ is increasing (decreasing) too on $(0,r).$
\end{lemma}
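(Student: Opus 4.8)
The plan is to show that the derivative of $h:=f/g$ is nonnegative on $(0,r)$ (in the increasing case) by reading off the sign of its numerator from a power series expansion. First, since $b_n>0$ for every $n$, in particular $b_0>0$, we have $g(x)\geq b_0>0$ for all $x\in[0,r)$; hence $h$ is well defined, and being a quotient of two real-analytic functions with non-vanishing denominator it is differentiable on $(0,r)$ with
\[
h'(x)=\frac{f'(x)g(x)-f(x)g'(x)}{g(x)^2}.
\]
Thus it suffices to prove that $N(x):=f'(x)g(x)-f(x)g'(x)\geq 0$ on $(0,r)$ when $\{a_n/b_n\}_{n\geq 0}$ is increasing (and $\leq 0$ when it is decreasing).

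Next I would expand $N$ into a power series on $(-r,r)$. Term-by-term differentiation is valid inside the interval of convergence, and the Cauchy product of two series each convergent on $(-r,r)$ again converges there to the product; so $N(x)=\sum_{n\geq 0}c_nx^n$, and computing the convolution coefficients of $f'g$ and of $fg'$ and then combining them gives, with $j=n+1$,
\[
c_n=\sum_{k=1}^{j}k\,a_kb_{j-k}-\sum_{k=0}^{j-1}(j-k)\,a_kb_{j-k}=\sum_{k=0}^{j}(2k-j)\,a_kb_{j-k}.
\]

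The only step that is more than bookkeeping is to symmetrize this last sum under the involution $k\mapsto j-k$. Since $k$ and $j-k$ run over the same index set, adding to $c_n$ the sum obtained by that substitution yields
\[
2c_n=\sum_{k=0}^{j}(2k-j)\bigl(a_kb_{j-k}-a_{j-k}b_k\bigr)
=\sum_{k=0}^{j}(2k-j)\,b_kb_{j-k}\left(\frac{a_k}{b_k}-\frac{a_{j-k}}{b_{j-k}}\right).
\]
If $\{a_n/b_n\}$ is increasing, then $2k-j\geq 0$ exactly when $k\geq j-k$, i.e. exactly when $a_k/b_k\geq a_{j-k}/b_{j-k}$; so the factors $2k-j$ and $a_k/b_k-a_{j-k}/b_{j-k}$ always share the same sign, while $b_kb_{j-k}>0$. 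Hence every summand is nonnegative, $c_n\geq 0$ for all $n$, and therefore $N(x)=\sum_{n\geq0}c_nx^n\geq 0$ for $x\in(0,r)$. This gives $h'(x)\geq 0$ on $(0,r)$, i.e. $f/g$ is increasing there. The decreasing case follows by reversing all inequalities, or simply by applying the increasing case to $-f$ in place of $f$.

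I expect the main difficulty to be one of care rather than of depth: correctly evaluating the convolution coefficient of $f'g-fg'$ and rewriting it in the symmetric form $\sum_k(2k-j)a_kb_{j-k}$, while keeping the index ranges precise so that the substitution $k\mapsto j-k$ is an exact involution of the summation. Once $c_n$ is in this form, the monotonicity hypothesis on $\{a_n/b_n\}$ makes the sign of each summand transparent, and the conclusion is immediate.
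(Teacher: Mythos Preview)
Your argument is correct: the convolution computation and the symmetrization $k\mapsto j-k$ cleanly show that every coefficient $c_n$ of $f'g-fg'$ is nonnegative (nonpositive) under the hypothesis, which gives the monotonicity of $f/g$ on $(0,r)$. There is nothing to compare against here, since the paper does not supply its own proof of this lemma; it merely quotes the result of Biernacki and Krzy\.z and refers the reader to the literature for proofs. Your write-up is in fact one of the standard proofs alluded to in those references.
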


For different proofs and various applications of this result the
interested reader is referred to the recent papers
\cite{anderson,alzer,bala,bariczcmft,bariczj1,bariczj2,bariczjmi,bariczedin,heikkala,ponnusamy}
and to the references therein. We note that
the above result remains true if we get even or odd functions, that is, if we have in Lemma \ref{lemmapower} the power series
$f(x)=\sum\limits_{n\geq0}a_nx^{2n}$ and $g(x)=\sum\limits_{n\geq0}b_nx^{2n}$
or $f(x)=\sum\limits_{n\geq0}a_nx^{2n+1}$ and $g(x)=\sum\limits_{n\geq0}b_nx^{2n+1}.$

Our main result is the following theorem.

\begin{theorem}\label{th1}
The following assertions are true:
\begin{enumerate}
\item[\bf a.] If $\nu\geq-\frac{1}{2},$ then the function $x\mapsto I_{\nu+1}(x)/\ls_{\nu}(x)$ is increasing on $(0,\infty).$
\item[\bf b.] If $\nu\in\left(-\frac{3}{2},-\frac{1}{2}\right],$ then the function $x\mapsto I_{\nu+1}(x)/\ls_{\nu}(x)$ is decreasing on $(0,\infty).$
\item[\bf c.] If $\mu\geq \nu>-\frac{3}{2},$ then the function $x\mapsto 2^{\nu-\mu}x^{\mu-\nu}\ls_{\nu}(x)/\ls_{\mu}(x)$ is increasing on $(0,\infty).$
\item[\bf d.] If $\nu\geq \mu>-\frac{3}{2},$ then the function $x\mapsto 2^{\nu-\mu}x^{\mu-\nu}\ls_{\nu}(x)/\ls_{\mu}(x)$ is decreasing on $(0,\infty).$
\item[\bf e.] The function $\nu\mapsto \mathcal{L}_{\nu}(x)=2^{\nu}\Gamma\left(\nu+\frac{3}{2}\right)x^{-\nu}\ls_{\nu}(x)$ is decreasing and log-convex on $(-\frac{3}{2},\infty)$ for all $x>0.$
\item[\bf f.] The function $\nu\mapsto \ls_{\nu+1}(x)/\ls_{\nu}(x)$ is decreasing on $(-\frac{3}{2},\infty)$ for all $x>0.$
\item[\bf g.] The function $x\mapsto x\ls_{\nu}'(x)/\ls_{\nu}(x)$ is increasing on $(0,\infty)$ for all $\nu>-\frac{3}{2}.$
\item[\bf h.] The function $x\mapsto\frac{1}{\sqrt{\pi}}+2^{\nu}\Gamma\left(\nu+\frac{3}{2}\right)x^{-\nu}\ls_{\nu+1}(x)$ is log-convex on $(0,\infty)$ for all $\nu\geq-\frac{1}{2}.$
\end{enumerate}
In particular, for all $x>0$ the following inequalities are valid:
\begin{equation}\label{ineq1}
\ls_{\nu}(x)<\frac{2\Gamma(\nu+2)}{\sqrt{\pi}\Gamma\left(\nu+\frac{3}{2}\right)}I_{\nu+1}(x),\ \ \ \nu>-\frac{1}{2},
\end{equation}
\begin{equation}\label{ineq2}
2^{\nu}\Gamma\left(\nu+\frac{3}{2}\right)x^{-\nu}\ls_{\nu}(x)>2^{\mu}\Gamma\left(\mu+\frac{3}{2}\right)x^{-\mu}\ls_{\mu}(x),\ \ \ \mu>\nu>-\frac{3}{2},
\end{equation}
\begin{equation}\label{ineq3}
0<\left[\ls_{\nu}(x)\right]^2-\ls_{\nu-1}(x)\ls_{\nu+1}(x)<\frac{\left[\ls_{\nu}(x)\right]^2}{\nu+\frac{3}{2}},\ \ \ \nu>-\frac{1}{2},
\end{equation}
\begin{equation}\label{ineq4}
\frac{\ls_{\nu+1}(x)}{\ls_{\nu}(x)}<\frac{\cosh x-1}{\sinh x}<1, \ \ \ \nu>-\frac{1}{2},
\end{equation}
\begin{equation}\label{ineqcomp}
\left(1+\frac{\nu^2}{x^2}\right)\left[\ls_{\nu}(x)\right]^2-\left[\ls_{\nu}'(x)\right]^2+
\frac{\left(\nu+\frac{1}{2}\right)x^{\nu-1}\ls_{\nu}(x)}{\sqrt{\pi}2^{\nu-1}\Gamma\left(\nu+\frac{3}{2}\right)}>0, \ \ \ \nu>-\frac{3}{2}.
\end{equation}
Moreover, if $\nu\in\left(-\frac{3}{2},-\frac{1}{2}\right),$ then \eqref{ineq1} and the left-hand side of \eqref{ineq4} are reversed; if $\nu>\mu>-\frac{3}{2},$ then \eqref{ineq2} is reversed; equality holds in \eqref{ineq1} and in the left-hand side of \eqref{ineq4} if $\nu=-\frac{1}{2}$; and equality holds in \eqref{ineq2} if $\mu=\nu.$
\end{theorem}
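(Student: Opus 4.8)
The plan is to put everything on the power series
\[
\ls_{\nu}(x)=\sum_{n\ge 0}\frac{(x/2)^{2n+\nu+1}}{\Gamma\!\left(n+\tfrac32\right)\Gamma\!\left(n+\nu+\tfrac32\right)},
\]
which has positive coefficients for $\nu>-\tfrac32$, on the companion expansion $I_{\nu+1}(x)=\sum_{n\ge 0}(x/2)^{2n+\nu+1}/(n!\,\Gamma(n+\nu+2))$, and on two elementary facts drawn from these series: writing $\Gamma(n+\nu+\tfrac32)=\Gamma(\nu+\tfrac32)(\nu+\tfrac32)_{n}$ gives $\mathcal{L}_{\nu}(x)=\tfrac{x}{2}\sum_{n\ge 0}(x/2)^{2n}/\big(\Gamma(n+\tfrac32)(\nu+\tfrac32)_{n}\big)$, and the series also yields the recurrence $\ls_{\nu+1}(x)=\ls_{\nu}'(x)-\tfrac{\nu}{x}\ls_{\nu}(x)-(x/2)^{\nu}/\big(\sqrt{\pi}\,\Gamma(\nu+\tfrac32)\big)$. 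Parts a and b are then immediate from Lemma~\ref{lemmapower} in its even-series form: after cancelling $(x/2)^{\nu+1}$ from numerator and denominator, $I_{\nu+1}(x)/\ls_{\nu}(x)$ is a quotient of even power series whose coefficient quotient is $a_{n}/b_{n}=\Gamma(n+\tfrac32)\Gamma(n+\nu+\tfrac32)/(n!\,\Gamma(n+\nu+2))$, and a one-line computation gives $\dfrac{a_{n+1}/b_{n+1}}{a_{n}/b_{n}}=\dfrac{(n+3/2)(n+\nu+3/2)}{(n+1)(n+\nu+2)}$, whose numerator minus denominator equals $\tfrac12(\nu+\tfrac12)$; hence the quotient increases when $\nu\ge-\tfrac12$ and decreases when $-\tfrac32<\nu\le-\tfrac12$. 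Parts c and d use the same lemma, the factor $2^{\nu-\mu}x^{\mu-\nu}$ being precisely what cancels the mismatch of leading exponents, with coefficient quotient $\Gamma(n+\mu+\tfrac32)/\Gamma(n+\nu+\tfrac32)$ whose consecutive ratio $(n+\mu+\tfrac32)/(n+\nu+\tfrac32)$ is monotone in the asserted direction.

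For part e, fix $x>0$; each summand of $\mathcal{L}_{\nu}(x)$ is a positive constant times $1/(\nu+\tfrac32)_{n}$, and since $(\nu+\tfrac32)_{n}=\prod_{j=0}^{n-1}(\nu+\tfrac32+j)$ is a product of positive increasing factors on $(-\tfrac32,\infty)$ this is decreasing in $\nu$; moreover $\log\big[(\nu+\tfrac32)_{n}\big]^{-1}=-\sum_{j=0}^{n-1}\log(\nu+\tfrac32+j)$ is a sum of convex functions, so each summand is log-convex in $\nu$, and a sum of positive log-convex functions is log-convex, which proves e. For part g, from $\ls_{\nu}(x)=(x/2)^{\nu+1}\sum_{n\ge 0}c_{n}x^{2n}$ with $c_{n}>0$ one gets $x\ls_{\nu}'(x)/\ls_{\nu}(x)=(\nu+1)+\big(\sum_{n\ge 1}2nc_{n}x^{2n}\big)/\big(\sum_{n\ge 0}c_{n}x^{2n}\big)$, and the quotient on the right is strictly increasing on $(0,\infty)$ by Lemma~\ref{lemmapower}, its coefficient quotient being $2n$. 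For part h, inserting the Poisson-type integral for $\ls_{\nu+1}$ and integrating by parts once yields, for $\nu>-\tfrac12$, the identity
\[
\frac{1}{\sqrt{\pi}}+2^{\nu}\Gamma\!\left(\nu+\tfrac32\right)x^{-\nu}\ls_{\nu+1}(x)=\frac{2\nu+1}{\sqrt{\pi}}\int_{0}^{1}t(1-t^{2})^{\nu-\frac12}\cosh(xt)\,\dt,
\]
the left-hand side reducing to $\cosh x/\sqrt{\pi}$ at $\nu=-\tfrac12$; since $x\mapsto\cosh(xt)$ is log-convex on $(0,\infty)$ for each $t\in[0,1]$ and the weight is nonnegative, the integral is log-convex in $x$ by H\"older's inequality.

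Part f is the main obstacle. Combining the recurrence above with the defining relation $\ls_{\nu}(x)=x^{\nu}\mathcal{L}_{\nu}(x)/\big(2^{\nu}\Gamma(\nu+\tfrac32)\big)$ one obtains the clean identity $\ls_{\nu+1}(x)/\ls_{\nu}(x)=\big(\mathcal{L}_{\nu}'(x)-1/\sqrt{\pi}\big)/\mathcal{L}_{\nu}(x)$. By the series for $\mathcal{L}_{\nu}$, both $\mathcal{L}_{\nu}(x)$ and $\mathcal{L}_{\nu}'(x)-1/\sqrt{\pi}$ are of the form $\sum_{n\ge 0}e_{n}(x)/(\nu+\tfrac32)_{n}$ with $e_{n}(x)\ge 0$ (the numerator's coefficient vanishing at $n=0$), and the ratio of the two coefficient sequences is increasing in $n$. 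Hence, for $\nu'>\nu$, putting $\lambda_{n}=(\nu+\tfrac32)_{n}/(\nu'+\tfrac32)_{n}$ — a sequence decreasing in $n$, because each additional factor $(\nu+\tfrac32+n)/(\nu'+\tfrac32+n)$ lies in $(0,1)$ — the desired inequality $\ls_{\nu'+1}/\ls_{\nu'}\le\ls_{\nu+1}/\ls_{\nu}$ reduces, after clearing denominators and symmetrising in the two summation indices, to $\sum_{n,m\ge 0}(\alpha_{n}-\alpha_{m})(\lambda_{n}-\lambda_{m})\,p_{n}p_{m}\le 0$, in which $p_{n}>0$ and $\alpha_{n}$ is increasing; this is the Chebyshev rearrangement inequality, and the argument is valid on all of $(-\tfrac32,\infty)$. (The same reduction also bypasses the awkward extension of the Poisson integral below $\nu=-\tfrac12$.)

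The displayed inequalities then follow by taking limits. Letting $x\to 0^{+}$ in a and b turns the monotonicity of $I_{\nu+1}(x)/\ls_{\nu}(x)$, whose limit is $\sqrt{\pi}\,\Gamma(\nu+\tfrac32)/(2\Gamma(\nu+2))$, into \eqref{ineq1} and its reversal, with equality at $\nu=-\tfrac12$ because $\ls_{-1/2}=I_{1/2}$; the same device applied to c and d (limit $\Gamma(\mu+\tfrac32)/\Gamma(\nu+\tfrac32)$) gives \eqref{ineq2}. For \eqref{ineq3}, the left inequality $[\ls_{\nu}]^{2}>\ls_{\nu-1}\ls_{\nu+1}$ is just $\ls_{\nu+1}/\ls_{\nu}<\ls_{\nu}/\ls_{\nu-1}$ from f, while from $\ls_{\mu}=x^{\mu}\mathcal{L}_{\mu}/\big(2^{\mu}\Gamma(\mu+\tfrac32)\big)$ one computes $\ls_{\nu-1}(x)\ls_{\nu+1}(x)/[\ls_{\nu}(x)]^{2}=\tfrac{2\nu+1}{2\nu+3}\,\mathcal{L}_{\nu-1}(x)\mathcal{L}_{\nu+1}(x)/[\mathcal{L}_{\nu}(x)]^{2}$, so the log-convexity of $\nu\mapsto\mathcal{L}_{\nu}(x)$ from e gives $\ls_{\nu-1}\ls_{\nu+1}/[\ls_{\nu}]^{2}>\tfrac{2\nu+1}{2\nu+3}$, which rearranges to exactly the right inequality of \eqref{ineq3}. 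Inequality \eqref{ineq4} is f evaluated against the endpoint value $\ls_{1/2}/\ls_{-1/2}=(\cosh x-1)/\sinh x$ together with the elementary $\cosh x-1<\sinh x$; and \eqref{ineqcomp} is part g made explicit, since $(x\ls_{\nu}'/\ls_{\nu})'>0$ reads $\ls_{\nu}\ls_{\nu}'+x\ls_{\nu}\ls_{\nu}''-x[\ls_{\nu}']^{2}>0$, and eliminating $\ls_{\nu}''$ by the inhomogeneous modified Struve equation $x^{2}\ls_{\nu}''+x\ls_{\nu}'-(x^{2}+\nu^{2})\ls_{\nu}=4(x/2)^{\nu+1}/\big(\sqrt{\pi}\,\Gamma(\nu+\tfrac12)\big)$ and then dividing by $x^{2}$ produces \eqref{ineqcomp}.
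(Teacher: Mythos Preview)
Your proof is correct. For parts a--e, g, h and for the derivation of the inequalities \eqref{ineq1}--\eqref{ineqcomp} you follow essentially the paper's route: Lemma~\ref{lemmapower} on the coefficient ratios for a--d and g, termwise monotonicity and log-convexity of $1/(\nu+\tfrac32)_n$ for e, and the Poisson-type integral plus H\"older for h (your explicit check at the endpoint $\nu=-\tfrac12$ via $\ls_{1/2}$ is in fact tidier than the paper, which writes the integrated identity on a range where the integrand is not integrable).

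The one genuinely different step is part f. The paper obtains it by differentiating the quotient from part c, rewriting the derivative with the relation $[x^{-\nu}\ls_{\nu}]'=2^{-\nu}/\bigl(\sqrt{\pi}\,\Gamma(\nu+\tfrac32)\bigr)+x^{-\nu}\ls_{\nu+1}$, and then using \eqref{ineq2} to bound the inhomogeneous remainder, ending with an inequality of the form
\[
\ls_{\nu+1}(x)\ls_{\mu}(x)-\ls_{\nu}(x)\ls_{\mu+1}(x)\ \ge\ \frac{2^{-\mu}x^{-\nu}\ls_{\nu}(x)}{\sqrt{\pi}\,\Gamma(\mu+\tfrac32)}-\frac{2^{-\nu}x^{-\mu}\ls_{\mu}(x)}{\sqrt{\pi}\,\Gamma(\nu+\tfrac32)}>0\qquad(\mu\ge\nu),
\]
which is slightly stronger than bare monotonicity. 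You instead write $\ls_{\nu+1}/\ls_{\nu}=(\mathcal{L}_{\nu}'-1/\sqrt{\pi})/\mathcal{L}_{\nu}$, observe that both numerator and denominator are series $\sum e_n(x)/(\nu+\tfrac32)_n$ with $\alpha_n=e_n^{\mathrm{num}}/e_n^{\mathrm{den}}$ increasing, and then compare the values at $\nu$ and $\nu'>\nu$ by symmetrising the double sum and invoking the Chebyshev correlation inequality with the decreasing sequence $\lambda_n=(\nu+\tfrac32)_n/(\nu'+\tfrac32)_n$. Your argument is self-contained (it needs neither c nor \eqref{ineq2}) and makes visible that the underlying mechanism is the same rearrangement principle as in Lemma~\ref{lemmapower}, now applied in the parameter $\nu$ rather than in the variable $x$; the paper's route, by contrast, stays closer to the Bessel-function template and produces the quantitative intermediate inequality above as a by-product.
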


\begin{proof}[\bf Proof]
{\bf a.} \& {\bf b.} Let us recall the power series representations of the modified Bessel and Struve functions $I_{\nu+1}$ and $\ls_{\nu},$ which are as follows
$$I_{\nu+1}(x)=\sum_{n\geq0}\frac{\left(\frac{x}{2}\right)^{2n+\nu+1}}{n!\Gamma(n+\nu+2)} \ \ \ \ \mbox{and}\ \ \ \ \ls_{\nu}(x)=\sum_{n\geq0}\frac{\left(\frac{x}{2}\right)^{2n+\nu+1}}{\Gamma\left(n+\frac{3}{2}\right)\Gamma\left(n+\nu+\frac{3}{2}\right)}.$$
In view of the above representations the quotient $I_{\nu+1}(x)/\ls_{\nu}(x)$ can be rewritten as
$$\frac{I_{\nu+1}(x)}{\ls_{\nu}(x)}=\frac{\sum\limits_{n\geq0}\alpha_{\nu,n}\left(\frac{x}{2}\right)^{2n}}{\sum\limits_{n\geq0}\beta_{\nu,n}\left(\frac{x}{2}\right)^{2n}},$$
where $$\alpha_{\nu,n}=\frac{1}{n!\Gamma(n+\nu+2)}\ \ \ \ \mbox{and} \ \ \ \ \beta_{\nu,n}=\frac{1}{\Gamma\left(n+\frac{3}{2}\right)\Gamma\left(n+\nu+\frac{3}{2}\right)}.$$
Now, if we let $$q_n=\frac{\alpha_{\nu,n}}{\beta_{\nu,n}}=\frac{\Gamma\left(n+\frac{3}{2}\right)\Gamma\left(\nu+n+\frac{3}{2}\right)}{\Gamma(n+1)\Gamma(\nu+n+2)},$$
then
$$\frac{q_{n+1}}{q_n}=\frac{\left(n+\frac{3}{2}\right)\left(\nu+n+\frac{3}{2}\right)}{(n+1)(\nu+n+2)},$$
and for each $n\in\{0,1,\dots\}$ this is greater (less) than or equal to $1$ if $\nu\geq-\frac{1}{2}$ ($\nu\leq-\frac{1}{2}$). The assumption that $\nu>-\frac{3}{2}$ is necessary in order to have all coefficients of the power series of $\ls_{\nu}$ positive. Now, observe that the radius of convergence of power series $I_{\nu+1}(x)$ and $\ls_{\nu}(x)$ is infinity and applying Lemma \ref{lemmapower} the proof of these parts is done.

{\bf c.} \& {\bf d.} We proceed similarly as above, we shall apply Lemma \ref{lemmapower}. Observe that
$$\frac{2^{\nu}x^{-\nu}\ls_{\nu}(x)}{2^{\mu}x^{-\mu}\ls_{\mu}(x)}=\frac{\sum\limits_{n\geq0}\beta_{\nu,n}\left(\frac{x}{2}\right)^{2n}}{\sum\limits_{n\geq0}\beta_{\mu,n}\left(\frac{x}{2}\right)^{2n}},$$
and if we consider the quotient $\omega_n=\beta_{\nu,n}/\beta_{\mu,n},$ then $\omega_{n+1}/\omega_n=\left(\mu+n+\frac{3}{2}\right)/\left(\nu+n+\frac{3}{2}\right).$ Consequently, the sequence $\{\omega_n\}_{n\geq0}$ is increasing (decreasing) if $\mu\geq\nu$ ($\mu\leq\nu$).

{\bf e.} The first part of the statement is actually equivalent to \eqref{ineq2}, so it is enough to prove \eqref{ineq2}. But this inequality is equivalent to
$$\frac{2^{\nu}x^{-\nu}\ls_{\nu}(x)}{2^{\mu}x^{-\mu}\ls_{\mu}(x)}>\lim_{x\to0}\left[\frac{2^{\nu}x^{-\nu}\ls_{\nu}(x)}{2^{\mu}x^{-\mu}\ls_{\mu}(x)}\right]
=\frac{\Gamma\left(\mu+\frac{3}{2}\right)}{\Gamma\left(\nu+\frac{3}{2}\right)},$$
which readily follows from part {\bf c}. Note that, the monotonicity of $\nu\mapsto \mathcal{L}_{\nu}(x)$ can be proved also by using a different argument. For this consider the ascending factorial ${(a)}_n=a(a+1)\dots(a+n-1)=\Gamma(a+n)/\Gamma(a)$ and consider the sequence $\left\{\gamma_{\nu,n}\right\}_{n\geq0},$ defined by
$$\gamma_{\nu,n}=\Gamma\left(\nu+\frac{3}{2}\right)\beta_{\nu,n}=\frac{\Gamma\left(\nu+\frac{3}{2}\right)}
{\Gamma\left(n+\frac{3}{2}\right)\Gamma\left(n+\nu+\frac{3}{2}\right)}=\frac{1}{\Gamma\left(n+\frac{3}{2}\right){\left(\nu+\frac{3}{2}\right)}_n}.$$
Clearly if $\mu\geq\nu>-\frac{3}{2},$ then ${\left(\mu+\frac{3}{2}\right)}_n\geq {\left(\nu+\frac{3}{2}\right)}_n$ and consequently $\gamma_{\mu,n}\leq\gamma_{\nu,n}$ for all $n\in\{0,1,\dots\}.$ This in turn implies that $$\mathcal{L}_{\mu}(x)=\sum_{n\geq0}\gamma_{\mu,n}\left(\frac{x}{2}\right)^{2n+1}\leq\sum_{n\geq0}\gamma_{\nu,n}\left(\frac{x}{2}\right)^{2n+1}=\mathcal{L}_{\nu}(x)$$ for all $x>0,$ i.e., indeed the function $\nu\mapsto\mathcal{L}_{\nu}(x)$ is decreasing. Now, for log-convexity of $\nu\mapsto\mathcal{L}_{\nu}(x),$ we observe that it is enough to show the log-convexity of each individual term and to use the fact that sums of log-convex functions
are log-convex too. Thus, we just need to show that for each $n\in\{0,1,\dots\}$ we have
$$\partial^2 \left[\log\gamma_{\nu,n}\right]/\partial\nu^2=\psi'\left(\nu+\frac{3}{2}\right)-\psi'\left(\nu+n+\frac{3}{2}\right)\geq0,$$
where $\psi(x)=\Gamma'(x)/\Gamma(x)$ is the so-called digamma function. But, the digamma function $\psi$ is concave and consequently the function $\nu\mapsto \gamma_{\nu,n}$ is log-convex on $\left(-\frac{3}{2},\infty\right)$ for all $n\in\{0,1,\dots\}$, as we required. Now, observe that, by definition for all $\nu_1,\nu_2>-\frac{3}{2},$ $\alpha\in[0,1]$ and $x>0$ we have
$$\mathcal{L}_{\alpha\nu_1+(1-\alpha)\nu_2}(x)\leq\left[\mathcal{L}_{\nu_1}(x)\right]^{\alpha}\left[\mathcal{L}_{\nu_2}(x)\right]^{1-\alpha}.$$
Now, choosing $\alpha=\frac{1}{2},$ $\nu_1=\nu-1$ and $\nu_2=\nu+1$ in the above inequality, we obtain the Tur\'an type inequality
$$\mathcal{L}_{\nu}^2(x)-\mathcal{L}_{\nu-1}(x)\mathcal{L}_{\nu+1}(x)\leq0,$$
which is equivalent to the right-hand side of \eqref{ineq3}.

Note also that the reverse of \eqref{ineq2} clearly follows from part {\bf d} of this theorem. Moreover, the inequality \eqref{ineq1} and its reverse follow from parts {\bf a} and {\bf b}, we just need to compute the limit of $I_{\nu+1}(x)/\ls_{\nu}(x)$ when $x$ tends to zero, which is $[\sqrt{\pi}\Gamma\left(\nu+\frac{3}{2}\right)]/[2\Gamma(\nu+2)].$

{\bf f.} By using part {\bf c} we clearly have
$$\left[\frac{2^{\nu}x^{-\nu}\ls_{\nu}(x)}{2^{\mu}x^{-\mu}\ls_{\mu}(x)}\right]'\geq 0$$
when $\mu\geq \nu$ and $x>0.$ This is equivalent to inequality
$$\left[x^{-\nu}\ls_{\nu}(x)\right]'\left[x^{-\mu}\ls_{\mu}(x)\right]-\left[x^{-\nu}\ls_{\nu}(x)\right]\left[x^{-\mu}\ls_{\mu}(x)\right]'\geq0,$$
which in view of \cite[p. 292]{nist}
$$\left[x^{-\nu}\ls_{\nu}(x)\right]'=\frac{2^{-\nu}}{\sqrt{\pi}\Gamma\left(\nu+\frac{3}{2}\right)}+x^{-\nu}\ls_{\nu+1}(x),$$
can be rewritten as
$$x^{-\nu-\mu}\left[\ls_{\nu+1}(x)\ls_{\mu}(x)-\ls_{\nu}(x)\ls_{\mu+1}(x)\right]\geq \frac{2^{-\mu}x^{-\nu}\ls_{\nu}(x)}{\sqrt{\pi}\Gamma\left(\mu+\frac{3}{2}\right)}-\frac{2^{-\nu}x^{-\mu}\ls_{\mu}(x)}{\sqrt{\pi}\Gamma\left(\nu+\frac{3}{2}\right)}.$$
But, in view of \eqref{ineq2}, the expression on the right-hand side of the above inequality is positive, and with this the proof of this part is done. Now, if we choose $\mu=\nu+1$ in the inequality $$\ls_{\nu+1}(x)\ls_{\mu}(x)-\ls_{\nu}(x)\ls_{\mu+1}(x)\geq0,$$ and then we change $\nu$ to $\nu-1,$ we obtain the left-hand side of the Tur\'an type inequality \eqref{ineq3}.

Finally, let us focus on the inequality \eqref{ineq4}. By using the particular cases \cite[p. 291]{nist}
$$\ls_{-\frac{1}{2}}(x)=\sqrt{\frac{2}{\pi x}}\sinh x\ \ \ \mbox{and}\ \ \ \ls_{\frac{1}{2}}(x)=\sqrt{\frac{2}{\pi x}}(\cosh x-1),$$
and the result of part {\bf f} of this theorem, for all $x>0$ and $\nu>-\frac{1}{2}$ we have that $$\frac{\ls_{\nu+1}(x)}{\ls_{\nu}(x)}<\frac{\ls_{\frac{1}{2}}(x)}{\ls_{-\frac{1}{2}}(x)}=\frac{\cosh x-1}{\sinh x}.$$
Moreover, the above inequality is reversed if $\nu\in\left(-\frac{3}{2},-\frac{1}{2}\right)$ and $x>0;$ the equality is attained when $\nu=-\frac{1}{2}.$ For the right-hand side inequality in \eqref{ineq4} observe that the function $x\mapsto {\ls_{\frac{1}{2}}(x)}/{\ls_{-\frac{1}{2}}(x)}$ is increasing on $(0,\infty)$ since $$\left[\frac{\ls_{\frac{1}{2}}(x)}{\ls_{-\frac{1}{2}}(x)}\right]'=\frac{\cosh x-1}{\sinh^2x}>0$$ for all $x>0.$ On the other hand, ${\ls_{\frac{1}{2}}(x)}/{\ls_{-\frac{1}{2}}(x)}$ tends to $1,$ as $x$ tends to infinity, and this completes the proof of \eqref{ineq4}.

{\bf g.} Observe that the quotient $x\ls_{\nu}'(x)/\ls_{\nu}(x)$ can be rewritten as
$$\frac{x\ls_{\nu}'(x)}{\ls_{\nu}(x)}=\frac{\sum\limits_{n\geq0}\delta_{\nu,n}\left(\frac{x}{2}\right)^{2n}}
{\sum\limits_{n\geq0}\beta_{\nu,n}\left(\frac{x}{2}\right)^{2n}},$$
where $\delta_{\nu,n}=(2n+\nu+1)\beta_{\nu,n}.$ Since the sequence $\{\delta_{\nu,n}/\beta_{\nu,n}\}_{n\geq0}$ is increasing, by applying Lemma \ref{lemmapower}, the function $x\mapsto x\ls_{\nu}'(x)/\ls_{\nu}(x)$ is clearly increasing on $(0,\infty)$ for all $\nu>-\frac{3}{2}.$

Now, recall that the modified Struve function $\ls_{\nu}$ is a particular solution of the modified Struve equation \cite[p. 288]{nist}
$$x^2y''(x)+xy'(x)-(x^2+\nu^2)y(x)=\frac{x^{\nu+1}}{\sqrt{\pi}2^{\nu-1}\Gamma\left(\nu+\frac{1}{2}\right)},$$
and consequently
\begin{equation}\label{differtwo}\ls_{\nu}''(x)=\left(1+\frac{\nu^2}{x^2}\right)\ls_{\nu}(x)-
\frac{1}{x}\ls_{\nu}'(x)+\frac{x^{\nu-1}}{\sqrt{\pi}2^{\nu-1}\Gamma\left(\nu+\frac{1}{2}\right)}.\end{equation}
Thus, by using part {\bf g} of this theorem we obtain that
\begin{equation}\label{rec3}\frac{1}{x}\left[\ls_{\nu}(x)\right]^2\left[\frac{x\ls_{\nu}'(x)}{\ls_{\nu}(x)}\right]'=
\left(1+\frac{\nu^2}{x^2}\right)\left[\ls_{\nu}(x)\right]^2-\left[\ls_{\nu}'(x)\right]^2+
\frac{\left(\nu+\frac{1}{2}\right)x^{\nu-1}\ls_{\nu}(x)}{\sqrt{\pi}2^{\nu-1}\Gamma\left(\nu+\frac{3}{2}\right)},
\end{equation}
and this is positive for all $x>0$ and $\nu>-\frac{3}{2},$ as we required.

{\bf h.} First consider the integral representation \cite[p. 292]{nist}
\begin{equation}\label{integr}\ls_{\nu}(x)=\frac{2\left(\frac{x}{2}\right)^{\nu}}{\sqrt{\pi}\Gamma\left(\nu+\frac{1}{2}\right)}\int_0^{\frac{\pi}{2}}
\sinh(x\cos t)(\sin t)^{2\nu}\dt,\end{equation}
which holds for all $\nu>-\frac{1}{2}.$ Applying the change of variable $\cos t=s$ in the above integral, we get
$$\ls_{\nu}(x)=\frac{2\left(\frac{x}{2}\right)^{\nu}}{\sqrt{\pi}\Gamma\left(\nu+\frac{1}{2}\right)}\int_0^{1}
\left(1-s^2\right)^{\nu-\frac{1}{2}}\sinh(xs)\ds.$$
Integrating by parts we obtain
$$\ls_{\nu}(x)=-\frac{\left(\frac{x}{2}\right)^{\nu-1}}{\sqrt{\pi}\Gamma\left(\nu+\frac{1}{2}\right)}+
\frac{2\left(\nu-\frac{1}{2}\right)\left(\frac{x}{2}\right)^{\nu-1}}{\sqrt{\pi}\Gamma\left(\nu+\frac{1}{2}\right)}
\int_0^{1}\left(1-s^2\right)^{\nu-\frac{3}{2}}s\cosh(xs)\ds,$$
and replacing $\nu$ by $\nu+1$ one obtains
$$\ls_{\nu+1}(x)=-\frac{\left(\frac{x}{2}\right)^{\nu}}{\sqrt{\pi}\Gamma\left(\nu+\frac{3}{2}\right)}+
\frac{2\left(\nu+\frac{1}{2}\right)\left(\frac{x}{2}\right)^{\nu}}{\sqrt{\pi}\Gamma\left(\nu+\frac{3}{2}\right)}
\int_0^{1}\left(1-s^2\right)^{\nu-\frac{1}{2}}s\cosh(xs)\ds,$$
where $\nu>-\frac{3}{2}.$ Thus, we get the following integral representation
$$2^{\nu}\Gamma\left(\nu+\frac{3}{2}\right)x^{-\nu}\ls_{\nu+1}(x)=-\frac{1}{\sqrt{\pi}}+\frac{2\left(\nu+\frac{1}{2}\right)}{\sqrt{\pi}}
\int_0^{1}\left(1-s^2\right)^{\nu-\frac{1}{2}}s\cosh(xs)\ds.$$
Now, we shall use the classical H\"older-Rogers inequality for
integrals \cite[p. 54]{mitri}, that is,
   \begin{equation} \label{holder}
      \int_a^b|f(t)g(t)|\dt \leq {\left[\int_a^b|f(t)|^p\dt\right]}^{1/p}
           {\left[\int_a^b|g(t)|^q\dt\right]}^{1/q},
   \end{equation}
where $p>1,$ $1/p+1/q=1,$ $f$ and $g$ are real functions defined on $[a,b]$ and
$|f|^p,$ $|g|^q$ are integrable functions on $[a,b].$ Using \eqref{holder} and the fact that the hyperbolic cosine is log-convex, we obtain that
\begin{align*}
\int_0^1&\left(1-s^2\right)^{\nu-\frac{1}{2}}s\cosh\left((\alpha x+(1-\alpha)y)s\right)\ds\\
&\leq\int_0^1\left(1-s^2\right)^{\nu-\frac{1}{2}}s\left[\cosh(xs)\right]^{\alpha}\left[\cosh(ys)\right]^{1-\alpha}\ds\\
&=\int_0^1\left[\left(1-s^2\right)^{\nu-\frac{1}{2}}s\cosh(xs)\right]^{\alpha}\left[\left(1-s^2\right)^{\nu-\frac{1}{2}}s\cosh(ys)\right]^{1-\alpha}\ds\\
&\leq \left[\int_0^1\left(1-s^2\right)^{\nu-\frac{1}{2}}s\cosh(xs)\ds\right]^{\alpha}
\left[\int_0^1\left(1-s^2\right)^{\nu-\frac{1}{2}}s\cosh(ys)\ds\right]^{1-\alpha},
\end{align*}
where $\alpha\in[0,1],$ $x,y>0$ and $\nu>-\frac{3}{2}.$ In other words, the function $x\mapsto \int_0^{1}\left(1-s^2\right)^{\nu-\frac{1}{2}}s\cosh(xs)\ds$ is log-convex on $(0,\infty)$ for all $\nu>-\frac{3}{2},$ and consequently $x\mapsto\frac{1}{\sqrt{\pi}}+2^{\nu}\Gamma\left(\nu+\frac{3}{2}\right)x^{-\nu}\ls_{\nu+1}(x)$ is also log-convex on $(0,\infty)$ for all $\nu\geq-\frac{1}{2}.$
\end{proof}

\section{Concluding remarks and further results}
\setcounter{equation}{0}

In what follows we present some consequences of Theorem 1 and we present new proofs for some of the results contained therein. For example, by using different approach than in the proof of Theorem 1, we show that the Tur\'an type inequality \eqref{ineq3} is valid for all $\nu>-\frac{3}{2}$ and $x>0.$

{\bf 1.} It is worth to mention here that it is possible to improve the left-hand side of the inequality \eqref{ineq4} by using part {\bf f} of the above theorem. However, the obtained bound will be more complicated. Namely, for all $x>0$ and $\nu>\frac{1}{2}$ we have that $$\frac{\ls_{\nu+1}(x)}{\ls_{\nu}(x)}<\frac{\ls_{\frac{3}{2}}(x)}{\ls_{\frac{1}{2}}(x)}<\frac{\ls_{\frac{1}{2}}(x)}{\ls_{-\frac{1}{2}}(x)}=\frac{\cosh x-1}{\sinh x}.$$
Moreover, the left-hand side of the above inequality is reversed if $\nu\in\left(-\frac{3}{2},\frac{1}{2}\right)$ and $x>0;$ and the equality is attained when $\nu=\frac{1}{2}.$ By using the particular case \cite[p. 291]{nist}
$$\ls_{\frac{3}{2}}(x)=-\sqrt{\frac{x}{2\pi}}\left(1-\frac{2}{x^2}\right)+\sqrt{\frac{2}{\pi x}}\left(\sinh x-\frac{\cosh x}{x}\right),$$
the above inequality becomes
\begin{equation}\label{compl}\frac{\ls_{\nu+1}(x)}{\ls_{\nu}(x)}<\frac{\ls_{\frac{3}{2}}(x)}{\ls_{\frac{1}{2}}(x)}=\frac{1-\cosh x+x\sinh x-\frac{x^2}{2}}{x \cosh x -x}.\end{equation}

{\bf 2.} Now, we would like to present some consequences of the inequality \eqref{ineq4}. For this, first we combine the recurrence relations \cite[p. 292]{nist}
    \begin{equation}\label{rec0}\ls_{\nu-1}(x)-\ls_{\nu+1}(x)=
    \frac{2\nu}{x}\ls_{\nu}(x)+\frac{\left(\frac{x}{2}\right)^{\nu}}{\sqrt{\pi}\Gamma\left(\nu+\frac{3}{2}\right)},\end{equation}
    $$\ls_{\nu-1}(x)+\ls_{\nu+1}(x)=2\ls_{\nu}'(x)-\frac{\left(\frac{x}{2}\right)^{\nu}}{\sqrt{\pi}\Gamma\left(\nu+\frac{3}{2}\right)}$$
    and we obtain
    \begin{equation}\label{rec1}x\ls_{\nu}'(x)+\nu\ls_{\nu}(x)=x\ls_{\nu-1}(x).\end{equation}
    Now, in view of \eqref{ineq4} it is clear that $\ls_{\nu+1}(x)<\ls_{\nu}(x)$ for all $\nu\geq-\frac{1}{2},$ and by using the recurrence relation \eqref{rec1} we get
    \begin{equation}\label{ineq5}\frac{\ls_{\nu}'(t)}{\ls_{\nu}(t)}>1-\frac{\nu}{t}\end{equation}
    for all $\nu\geq\frac{1}{2}$ and $t>0.$ Integrating both sides of \eqref{ineq5} on $[x,y],$ where $0<x<y,$ we obtain the inequality
    \begin{equation}\label{ineq6}
    \frac{\ls_{\nu}(x)}{\ls_{\nu}(y)}<e^{x-y}\left(\frac{y}{x}\right)^{\nu},
    \end{equation}
    which holds for all $\nu\geq \frac{1}{2}$ and $0<x<y.$ Note that this inequality was proved also by Joshi and Nalwaya \cite[p. 51]{joshi} for $\nu>-\frac{1}{2},$ but just for $0<x<y<2\nu+1.$ For similar inequalities on modified Bessel functions of the first kind we refer to the paper of Laforgia \cite{laforgia}, see also \cite{bariczedin} for a survey on this kind of inequalities for modified Bessel functions. Moreover, we mention that it is possible to improve the inequalities \eqref{ineq5} and \eqref{ineq6}. Namely, by using the left-hand side of \eqref{ineq4} we obtain the following improvement of \eqref{ineq5}
    \begin{equation}\label{ineq7.0}\frac{\ls_{\nu}'(t)}{\ls_{\nu}(t)}\geq\frac{\sinh t}{\cosh t-1}-\frac{\nu}{t}\end{equation}
    for all $\nu\geq\frac{1}{2}$ and $t>0.$ Integrating both sides of \eqref{ineq5} on $[x,y],$ where $0<x<y,$ we obtain the inequality
    \begin{equation}\label{ineq7}
    \frac{\ls_{\nu}(x)}{\ls_{\nu}(y)}\leq\left(\frac{\cosh x-1}{\cosh y -1}\right)\left(\frac{y}{x}\right)^{\nu},
    \end{equation}
    which holds for all $\nu\geq \frac{1}{2}$ and $0<x<y.$ Observe that clearly \eqref{ineq7} improves \eqref{ineq6}. Finally, let us mention also that \eqref{ineq6} and \eqref{ineq7} actually mean that the functions $x\mapsto x^{\nu}e^{-x}\ls_{\nu}(x)$ and $x\mapsto x^{\nu}(\cosh x-1)^{-1}\ls_{\nu}(x)$
    are increasing on $(0,\infty)$ for all $\nu\geq \frac{1}{2}.$ Furthermore, according to the fact that the left-hand side of \eqref{ineq4} is reversed when $\nu\in\left(-\frac{3}{2},-\frac{1}{2}\right),$ the inequalities \eqref{ineq7.0} and \eqref{ineq7} are reversed when $|\nu|<\frac{1}{2}.$ In other words, the function $x\mapsto x^{\nu}(\cosh x-1)^{-1}\ls_{\nu}(x)$ is decreasing on $(0,\infty)$ for all $|\nu|< \frac{1}{2}.$

{\bf 3.} We also mention that the results of remark {\bf 2} can be improved too by using the inequality \eqref{compl}: the inequality
\begin{equation}\label{ineq8.0}\frac{\ls_{\nu}'(t)}{\ls_{\nu}(t)}\geq\frac{t\cosh t-t}{1-\cosh t+t\sinh t-\frac{t^2}{2}}-\frac{\nu}{t}\end{equation}
    is valid for all $\nu\geq\frac{3}{2}$ and $t>0.$ Integrating both sides of \eqref{ineq8.0} on $[x,y],$ where $0<x<y,$ we obtain the inequality
    \begin{equation}\label{ineq8}
    \frac{\ls_{\nu}(x)}{\ls_{\nu}(y)}\leq\left(\frac{1-\cosh x+x\sinh x-\frac{x^2}{2}}{1-\cosh y +y\sinh y-\frac{y^2}{2}}\right)\left(\frac{y}{x}\right)^{\nu},
    \end{equation}
    which holds for all $\nu\geq \frac{3}{2}$ and $0<x<y.$ Observe that clearly \eqref{ineq8.0} improves \eqref{ineq7.0}, and \eqref{ineq8} improves \eqref{ineq7}, when $\nu\geq \frac{3}{2}$. Finally, since \eqref{compl} is reversed when $\nu\in\left(-\frac{3}{2},\frac{1}{2}\right),$ the inequalities \eqref{ineq8.0} and \eqref{ineq8} are reversed when $\nu\in\left(-\frac{1}{2},\frac{3}{2}\right).$ In other words, the function $x\mapsto x^{\nu}(1-\cosh x+x\sinh x-\frac{x^2}{2})^{-1}\ls_{\nu}(x)$ is increasing on $(0,\infty)$ when $\nu\geq \frac{3}{2}$ and is decreasing on $(0,\infty)$ for all $\nu\in\left(-\frac{1}{2},\frac{3}{2}\right).$

{\bf 4.} Observe that
$$\frac{x\ls_{\nu}'(x)}{\ls_{\nu}(x)}=\nu+1+\frac{8}{3(2\nu+3)}x^2+{\dots}$$
and by using part {\bf g} of the above theorem, we get that ${x\ls_{\nu}'(x)}/{\ls_{\nu}(x)}>\nu+1$ for all $x>0$ and $\nu>-\frac{3}{2}.$ This inequality in view of \eqref{rec1} is equivalent to $\ls_{\nu-1}(x)/\ls_{\nu}(x)>(2\nu+1)/x,$ where $x>0$ and $\nu>-\frac{3}{2}.$ Note that the later inequality was obtained also by Joshi and Nalwaya \cite[p. 52]{joshi} for $x>0$ and $\nu>-\frac{1}{2}.$ In what follows we show that the above inequality can be used to prove the left-hand side of the Tur\'an type inequality \eqref{ineq3} for $\nu>-\frac{3}{2}$ and $x>0.$ For this, we combine first the recurrence relations \eqref{rec0} and \eqref{rec1}, and we obtain
\begin{equation}\label{rec2}
\ls_{\nu+1}(x)=\ls_{\nu}'(x)-\frac{\nu}{x}\ls_{\nu}(x)-\frac{\left(\frac{x}{2}\right)^{\nu}}{\sqrt{\pi}\Gamma\left(\nu+\frac{3}{2}\right)}.
\end{equation}
Now, by using \eqref{rec1} and \eqref{rec2} we obtain
$$\Delta_{\nu}(x)=\left[\ls_{\nu}(x)\right]^2-\ls_{\nu-1}(x)\ls_{\nu+1}(x)=
\left(1+\frac{\nu^2}{x^2}\right)\left[\ls_{\nu}(x)\right]^2-\left[\ls_{\nu}'(x)\right]^2+
\frac{x^{\nu}\ls_{\nu-1}(x)}{\sqrt{\pi}2^{\nu}\Gamma\left(\nu+\frac{3}{2}\right)}$$
and consequently
$$\Delta_{\nu}(x)-\frac{1}{x}\left[\ls_{\nu}(x)\right]^2\left[\frac{x\ls_{\nu}'(x)}{\ls_{\nu}(x)}\right]'=
\frac{x^{\nu}\ls_{\nu}(x)}{2^{\nu}\sqrt{\pi}\Gamma\left(\nu+\frac{3}{2}\right)}\left[\frac{\ls_{\nu-1}(x)}{\ls_{\nu}(x)}-\frac{2\nu+1}{x}\right]>0$$
for all $\nu>-\frac{3}{2}$ and $x>0.$ Thus, by using part {\bf g} of Theorem 1, we conclude that the Tur\'an type inequality \eqref{ineq3} is valid for all $\nu>-\frac{3}{2}$ and $x>0.$ We note that Joshi and Nalwaya \cite[p. 55]{nist} proved also \eqref{ineq3} for $\nu>-\frac{3}{2}$ and $x>0,$ by using a closed form expression of the Cauchy product $\ls_{\nu}(x)\ls_{\mu}(x).$

Finally, we mention that integrating on $[x,y]$ both sides of the inequality $$\frac{\ls_{\nu}'(t)}{\ls_{\nu}(t)}>\frac{\nu+1}{t},$$ we obtain the inequality
$$\frac{\ls_{\nu}(x)}{\ls_{\nu}(y)}<\left(\frac{x}{y}\right)^{\nu+1},$$
where $\nu>-\frac{3}{2}$ and $0<x<y.$ This inequality was proved also by Joshi and Nalwaya \cite[p. 52]{nist} for $\nu>-\frac{1}{2}$ and $0<x<y.$

{\bf 5.} We also mention that it is possible to deduce different upper bounds for the modified Struve function $\ls_{\nu}$ than in \eqref{ineq1}. For example, by using part {\bf e} of our main theorem, we can deduce that $\nu\mapsto 2^{\nu}\Gamma(\nu+1)x^{-\nu}\ls_{\nu}(x)$ is decreasing on $(-1,\infty)$ as the product of the functions $\nu\mapsto 2^{\nu}\Gamma\left(\nu+\frac{3}{2}\right)x^{-\nu}\ls_{\nu}(x)$ and $\nu\mapsto q(\nu)=\Gamma(\nu+1)/\Gamma\left(\nu+\frac{3}{2}\right).$ Here we used that for each $\nu>-1$ we have $$\frac{q'(\nu)}{q(\nu)}=\psi(\nu+1)-\psi\left(\nu+\frac{3}{2}\right)<0$$ since the gamma function is log-convex, that is, the digamma function is increasing. Thus, by using the monotonicity of $\nu\mapsto 2^{\nu}\Gamma(\nu+1)x^{-\nu}\ls_{\nu}(x)$ we obtain
    \begin{equation}\label{ineqbo}\ls_{\nu}(x)\leq \frac{x^{\nu}\ls_0(x)}{2^{\nu}\Gamma(\nu+1)}\end{equation}
    for all $\nu\geq0$ and $x>0.$ Moreover, the above inequality is reversed when $\nu\in(-1,0)$ and $x>0.$ Surprisingly, the inequality \eqref{ineqbo}
    can be deduced also by using the Chebyshev integral inequality \cite[p. 40]{mitri}, which is as follows: if $f,g:[a,b]\rightarrow\mathbb{R}$ are integrable functions, both increasing or both decreasing and $p:[a,b]\rightarrow\mathbb{R}$ is a positive integrable function, then
\begin{equation}\label{csebisev}
\int_a^bp(t)f(t)\dt\int_a^bp(t)g(t)\dt\leq
\int_a^bp(t)\dt\int_a^bp(t)f(t)g(t)\dt.
\end{equation}
Note that if one of the functions $f$ or $g$ is decreasing and the
other is increasing, then \eqref{csebisev} is reversed. Now, we shall use \eqref{csebisev} and \eqref{integr} to prove \eqref{ineqbo}. For this consider the functions $p,f,g:\left[0,\frac{\pi}{2}\right]\to\mathbb{R},$ defined by $p(t)=1,$ $f(t)=\sinh (x\cos t)$ and $g(t)=(\sin t)^{2\nu}.$ Observe that $f$ is decreasing and $g$ is increasing (decreasing) if $\nu\geq0$ ($\nu\leq0$). On the other hand, we have
$$\int_0^{\frac{\pi}{2}}\sinh(x\cos t)\dt=\frac{\pi}{2}\ls_0(x)\ \ \ \mbox{and}\ \ \ \int_0^{\frac{\pi}{2}}(\sin t)^{2\nu}\dt=\frac{\sqrt{\pi}\Gamma\left(\nu+\frac{1}{2}\right)}{2\Gamma(\nu+1)},$$
and applying the Chebyshev inequality \eqref{csebisev} we get the inequality \eqref{ineqbo} for all $\nu\geq0,$ and its reverse when $\nu\in\left(-\frac{1}{2},0\right).$

{\bf 6.} We also note that if we combine the inequality \eqref{ineq1} with \cite[p. 583]{bariczedin}
$$I_{\nu}(x)\leq\frac{x^{\nu}}{2^{\nu}\Gamma(\nu+1)}e^{\frac{x^2}{4(\nu+1)}},\ \ \ \nu>-1,\ x>0,$$
then we obtain the upper bound
$$\ls_{\nu}(x)<\frac{x^{\nu+1}e^{\frac{x^2}{4(\nu+2)}}}{\sqrt{\pi}2^{\nu}\Gamma\left(\nu+\frac{3}{2}\right)},$$
where $\nu>-\frac{1}{2}$ and $x>0.$ A similar upper bound for the modified Struve function can be obtained by combining the inequality \eqref{ineqbo} with \eqref{ineq1}. In this way we obtain that
$$\ls_{\nu}(x)<\frac{x^{\nu+1}e^{\frac{x^2}{8}}}{\pi 2^{\nu-1}\Gamma(\nu+1)},$$
where $\nu\geq0$ and $x>0.$ Another inequality of this kind can be obtained from parts {\bf c} and {\bf d} of the main theorem. Namely, if take $\mu=-\frac{1}{2}$ in parts {\bf c} and {\bf d} of Theorem 1, then clearly $x\mapsto \mathcal{L}_{\nu}(x)/\mathcal{L}_{-\frac{1}{2}}(x)=\sqrt{\pi}\mathcal{L}_{\nu}(x)/\sinh x$ is increasing on $(0,\infty)$ if $\nu\in\left(-\frac{3}{2},-\frac{1}{2}\right),$ and decreasing on $(0,\infty)$ if $\nu>-\frac{1}{2}.$ Consequently, for all $\nu>-\frac{1}{2}$ and $x>0$ we have
$$\ls_{\nu}(x)<\frac{x^{\nu}\sinh x}{\sqrt{\pi}2^{\nu}\Gamma\left(\nu+\frac{3}{2}\right)},$$
and this inequality is reversed if $\nu\in\left(-\frac{3}{2},-\frac{1}{2}\right);$ equality is obtained when $\nu=-\frac{1}{2}.$ Note also that the above inequality is a particular case of the fact that the function $\nu\mapsto \mathcal{L}_{\nu}(x)$ is decreasing on $\left(-\frac{3}{2},\infty\right)$ for all $x>0,$ conform part {\bf e} of Theorem 1.

Moreover, if we consider the quotient
$$Q_{\nu}(x)=\frac{\mathcal{L}_{\nu}(x)}{\sinh \frac{x}{2\nu+3}}=\frac{\sum\limits_{n\geq0}\frac{\left(\frac{x}{2}\right)^{2n+1}}{\Gamma\left(n+\frac{3}{2}\right){\left(\nu+\frac{3}{2}\right)}_n}}
{\sum\limits_{n\geq0}\frac{\left(\frac{x}{2}\right)^{2n+1}}{(2n+1)!\left(\nu+\frac{3}{2}\right)^{2n+1}}},$$ then we can
get a lower bound for the modified Struve function. Namely, the monotonicity of the sequence $\{\lambda_{\nu,n}\}_{n\geq0},$ defined by
$$\lambda_{\nu,n}=\frac{(2n+1)!\left(\nu+\frac{3}{2}\right)^{2n+1}}{\Gamma\left(n+\frac{3}{2}\right)\left(\nu+\frac{3}{2}\right)_n},$$ will generate an inequality for the modified Struve function. For this observe that the inequality
$$\frac{\lambda_{\nu,n+1}}{\lambda_{\nu,n}}=\frac{(2n+2)(2n+3)\left(\nu+\frac{3}{2}\right)^2}{\left(n+\frac{3}{2}\right)\left(\nu+n+\frac{3}{2}\right)}>1$$
is equivalent to
$$4(\nu+1)(\nu+2)n^2+\left(10\nu^2+29\nu+\frac{78}{4}\right)n+\left(\nu+\frac{3}{2}\right)\left(6\nu+\frac{15}{2}\right)>0,$$
which is valid for all $\nu>-1$ and $n\in\{0,1,\dots\}.$ Thus, the sequence $\{\lambda_{\nu,n}\}_{n\geq0}$ is increasing and in view of Lemma \ref{lemmapower} the function $Q_{\nu}$ is increasing too on $(0,\infty)$ for all $\nu>-1.$ Consequently for all $\nu>-1$ and $x>0$ we have $Q_{\nu}(x)>\lim\limits_{x\to0}Q_{\nu}(x),$ that is,
$$\ls_{\nu}(x)>\frac{x^{\nu}\sinh \frac{x}{2\nu+3}}{\sqrt{\pi}2^{\nu-1}\Gamma\left(\nu+\frac{3}{2}\right)}.$$

{\bf 7.} Now, let us focus on the Tur\'an type inequality \eqref{ineq3}. In what follows, we show that this result can be deduced also by using some recent results from \cite{karp}. In \cite[Theorem 1]{karp} Kalmykov and Karp stated that if the non-trivial and non-negative sequence $\{a_n\}_{n\geq 0}$ is log-concave without internal zeros, then
    $$\nu\mapsto f(\nu,x)=\sum_{n\geq0}\frac{a_nx^n}{n!\Gamma(\nu+n)}$$
    is strictly log-concave on $(0,\infty)$ for all $x>0.$ Note that, following the proof of \cite[Theorem 1]{karp}, it can be proved that the function
    $$\nu\mapsto \sum_{n\geq0}\frac{a_nx^n}{\Gamma\left(n+\frac{3}{2}\right)\Gamma\left(\nu+n+\frac{3}{2}\right)}$$
    is strictly log-concave on $\left(-\frac{3}{2},\infty\right)$ for all $x>0,$ provided that the sequence $\{a_n\}_{n\geq0}$ is as above. Now, since $\nu\mapsto\left(\frac{x}{2}\right)^{\nu+1}$ is log-linear, from the above result (by choosing $a_n=1$ for all $n\in\{0,1,\dots\}$) we immediately have that the function $\nu\mapsto \ls_{\nu}(x)$ is strictly log-concave on $\left(-\frac{3}{2},\infty\right)$ for all $x>0.$ Consequently, by definition for all $\nu_1,\nu_2>-\frac{3}{2},$ $\nu_1\neq\nu_2,$ $\alpha\in(0,1)$ and $x>0$ we have
$$\ls_{\alpha\nu_1+(1-\alpha)\nu_2}(x)>\left[\ls_{\nu_1}(x)\right]^{\alpha}\left[\ls_{\nu_2}(x)\right]^{1-\alpha}.$$
Now, choosing $\alpha=\frac{1}{2},$ $\nu_1=\nu-1$ and $\nu_2=\nu+1$ in the above inequality, we obtain the left-hand side of the Tur\'an type inequality \eqref{ineq3} for all $\nu>-\frac{1}{2}$ and $x>0.$ It is worth to mention also here that the strict log-concavity of $\nu\mapsto\ls_{\nu}(x)$ is stronger than part {\bf f} of Theorem 1. Namely, since $\nu\mapsto \log\left[\ls_{\nu}(x)\right]$  is strictly concave on $\left(-\frac{3}{2},\infty\right)$ it follows that the function $\nu\mapsto \log\left[\ls_{\nu+a}(x)\right]-\log\left[\ls_{\nu}(x)\right]$ is strictly decreasing on $\left(-\frac{3}{2},\infty\right)$ for all $a>0$ and $x>0.$ Thus, choosing $a=1,$ we obtain that indeed the function $\nu\mapsto \ls_{\nu+1}(x)/\ls_{\nu}(x)$ is strictly decreasing on $\left(-\frac{3}{2},\infty\right)$ for all $x>0.$ This result can be obtained also by using \cite[Theorem 2]{karp}, which says that if the non-trivial and non-negative sequence $\{b_n\}_{n\geq 0}$ is log-concave without internal zeros, then the function
    $$\nu\mapsto g(\nu,x)=\sum_{n\geq0}\frac{b_nx^n}{\Gamma(\nu+n)}$$
    is strictly discrete Wright log-concave on $(0,\infty)$ for all $x>0,$ that is, we have
    $$g(\nu+1,x)g(\nu+a,x)-g(\nu,x)g(\nu+a+1,x)\geq0$$
    for all $a,x,\nu>0.$ Now, choosing the log-concave sequence $\{b_n\}_{n\geq0},$ defined by $b_n=1/\Gamma\left(n+\frac{3}{2}\right),$ we get $g\left(\nu+\frac{3}{2},\frac{x^2}{4}\right)=2^{\nu+1}x^{-\nu-1}\ls_{\nu}(x),$ and hence the function
    $\nu\mapsto 2^{\nu+1}x^{-\nu-1}\ls_{\nu}(x)$ is strictly discrete Wright log-concave on $\left(-\frac{3}{2},\infty\right)$ for all $x>0.$ Consequently,
    the function $$\nu\mapsto \frac{2^{\nu+2}x^{-\nu-2}\ls_{\nu+1}(x)}{2^{\nu+1}x^{-\nu-1}\ls_{\nu}(x)}=\frac{2\ls_{\nu+1}(x)}{x\ls_{\nu}(x)}$$ is strictly decreasing on $\left(-\frac{3}{2},\infty\right)$ for all $x>0.$ Finally, we mention that if the non-trivial and non-negative sequence $\{b_n\}_{n\geq0}$ is log-concave without internal zeros, then the function $\nu\mapsto g(\nu,x)$ satisfies the Tur\'an type inequality \cite[Remark 3]{karp}
    $$\frac{b_0^2}{\nu\left[\Gamma(\nu)\right]^2}\leq\left[g(\nu,x)\right]^2-g(\nu-1,x)g(\nu+1,x)\leq\frac{1}{\nu}\left[g(\nu,x)\right]^2,\ \ \ \nu,x>0,$$
    which in our case reduces to
    $$\frac{\frac{\pi}{4}\left(\frac{x}{2}\right)^{2\nu+2}}{\left(\nu+\frac{3}{2}\right)\left[\Gamma\left(\nu+\frac{3}{2}\right)\right]^2}
    <\left[\ls_{\nu}(x)\right]^2-\ls_{\nu-1}(x)\ls_{\nu+1}(x)<\frac{\left[\ls_{\nu}(x)\right]^2}{\nu+\frac{3}{2}},$$
    where $\nu>-\frac{3}{2}$ and $x>0.$ Observe that the left-hand side of the above inequality is better than the left-hand side of \eqref{ineq3}.

\end{document}